\documentclass[11pt]{article}

\usepackage[utf8]{inputenc}
\usepackage[T1]{fontenc}

\usepackage{amsmath,amssymb,amsthm,amsfonts}
\usepackage{mathtools}
\usepackage{graphicx}
\usepackage{enumitem}
\usepackage{mathrsfs}
\usepackage{bm}
\usepackage{hyperref}
\usepackage[numbers]{natbib}

\usepackage[a4paper,margin=1in]{geometry}

\usepackage{fancyhdr}

\newtheorem{theorem}{Theorem}[section]
\newtheorem{lemma}[theorem]{Lemma}

\newtheorem{proposition}[theorem]{Proposition}

\theoremstyle{definition}
\newtheorem{definition}[theorem]{Definition}
\newtheorem{example}[theorem]{Example}

\theoremstyle{remark}
\newtheorem{remark}[theorem]{Remark}

\title{
\vspace{-1cm}
\hrule
\vspace{0.4cm}
\Large\bfseries
Partial Extended b-Metric Space and Some Fixed Point Theorem
\vspace{0.4cm}
\hrule
}

\author{
Muhamad Abdillah Ahen$^{1}$ \and
Ivan Hadinata$^{2}$ \and
Raudhatul Mufizah$^{3}$\\[0.5cm]
$^{1}$Department of Mathematics, Universitas Hasanuddin\\
\texttt{ahenma23h@student.unhas.ac.id}\\
$^{2}$Department of Mathematics, Universitas Gadjah Mada\\
\texttt{ivanhadinata2005@mail.ugm.ac.id}\\
$^{3}$Department of Mathematics, Universitas Hasanuddin\\
\texttt{mufizahr@gmail.com}\\
}

\date{\today}

\begin{document}

\maketitle

\begin{abstract}
    In this paper, we introduce the concept of partial extended b-metric spaces (PEBMS) as a unification and generalization of extended b-metric spaces and partial b-metric spaces. This new structure incorporates a point-dependent control function together with the possibility of non-zero self-distance, providing a more flexible framework for the study of generalized metric spaces. We establish several fundamental properties of PEBMS, including convergence, Cauchy sequences, and 0-completeness. By introducing the notion of 0-Cauchy sequences, we extend various results from extended b-metric spaces to the PEBMS setting. In particular, we prove fixed point theorems for contractive mappings and show the existence and uniqueness of fixed points under suitable conditions. Furthermore, we demonstrate that every extended b-metric space can be viewed as a special case of a PEBMS. As an application, we study the stability of discrete dynamical systems within this framework. The results presented here generalize and enrich existing theories in metric-type spaces and open new directions for further research.
\end{abstract}

\textbf{Keywords:}
fixed point theorem, partial $b$-metric space, contraction mapping

\section{Introduction}
The classical metric space and Banach's fixed-point theorem (1922) have become an important foundation in mathematical analysis \cite{Karahan2018PARTIALBP,kamran2017,Chen2020}. However,  classical metric spaces are often insufficient for modeling more complex problems. This limitation has motivated the development of various generalizations, such as the $b$-metric space \cite{Bakhtin1989,Czerwik1993}, which modify the triangle inequality with a constant $s \geq 1$, and the extended $b$ -metric space \cite{kamran2017}, which replace the constant $s$ with a function $\theta (x,z) \geq 1$ that depends on the points involved. Since their introduction, many results of fixed-point theory have been developed within this framework, including generalizations of the Banach, Kannan, and Chatterjea fixed-point theorems \cite{Dung2017,Bhattacharjee2025}, and studies on multivalued operators and Suzuki-type contractions \cite{Boriceanu2010,Hussain2012}. Conversely, Matthews \cite{Matthews1994} introduced the concept of partial metric spaces, which permits the distance from a point to itself to be nonzero. This was later combined with $b$-metric spaces by Shukla \cite{Shukla2014}, to form partial $b$-metric spaces.

Although these generalizations have produced many fixed-point theorems, most research still addresses each space structure separately. Some studies have attempted to combine two concepts, such as partial $b$-metric spaces, which only merge two concepts but are limited to certain types of contractions. In addition, the results in extended $b$-metric spaces generally focus on specific classes of contractions and do not simultaneously accommodate various types of contractions within a unified space structure. These limitations reveal a research gap in developing a space capable of integrating the characteristics of partial and extended $b$-metric structures while supporting various types of contraction mappings.

Based on this background, this study is inspired to develop the concept of Partial Extended b-metric Space as a generalization that combines the flexible distance structures of partial metric and extended b-metric spaces. Through this framework, it is expected that new fixed-point theorems for several types of contraction mappings can be obtained simultaneously, thus offering a more comprehensive contribution to the advancement of fixed-point theory.

\section{Some Consepts}
\label{sec:headings}

In this section, we recall several basics definitions and preliminary results related to extended $b$-metric spaces and partial $b$-metric spaces. In particular, we present some fundamental notions together with auxiliary lemmas and known fixed point results that will be used in the sequel. These concepts form the theoretical foundation for the main results on fixed points in partial extended b-metric space. 
\vspace{5mm}
\begin{definition} \cite{kamran2017} \label{def1}
    Let $X$ be a non empty set and $\theta : X \times X \to [1, \infty)$. A function $d_\theta :X\times X \to [0,\infty) $ is called extended b-metric (EBM) if for all $x,y,z\in X$ it satisfies: 
    \begin{enumerate}
        \item $d_\theta (x,y)=0 \quad \iff \quad x=y$
        \item $d_\theta (x,y) = d_\theta (y,x)$
        \item $d_\theta (x,z) \leq \theta(x,z) [d_\theta (x,y)+d_\theta (y,z)]$
    \end{enumerate}
    The pair ($X$,$d_\theta$) is called extended b-metric space (EBMS)
\end{definition}

\vspace{2mm}
\begin{remark}
    if $\theta(x,y)=s$ for $s \geq 1$ we obtain the definition of b-metric space \cite{Bakhtin1989,Czerwik1993}
\end{remark}
\vspace{2mm}
\begin{example}
    Let $X=\{2,3,4 \}$, $\theta:X\times X \to [1, \infty)$, and $d_\theta:X\times X \to [0,\infty)$ as follows: 
    $$\theta(x,y)=1+x+y $$ 
    $$d_\theta (x,y) = \begin{cases} 0 \quad if \quad x=y \\
    20 \quad if \quad  x \neq y      
    \end{cases}
    $$ show that The pair ($X$,$d_\theta$) is EBMS

\begin{proof}
    
from definition ~\ref{def1} (1) and (2) are done by replacing $x=y$. For (3) we have 
\begin{align*}
    d_\theta (2,3) &= 20 \\
    &\leq \theta (2,3)[d_\theta (2,4)+d_\theta (4,3)] \\
    &= 240
\end{align*}
\begin{align*}
    d_\theta (2,4) &= 20 \\
    &\leq \theta (2,4)[d_\theta (2,3)+d_\theta (3,4)] \\
    &= 280
\end{align*}
in a similar way to $d_\theta (3,4)$, Hence for all $x,y,z \in X$ $$d_\theta (x,y)\leq \theta (x,y)[d_\theta (x,z)+d_\theta (z,y)]$$
Then we have the pair of $(X,d_\theta)$ is a EBMS
\end{proof} 
\end{example}

\vspace{5mm}
\begin{definition}\cite{agarwal2018banach}
    Let $(X, d)$ be a metric space. The mapping $T : X \to X$ is said to be Lipschitzian if there exist a constant $k>0$ (called Lipschitz constant) such that $$d(Tx, Ty) \leq kd(x,y), \quad (x,y) \in X \times X.$$
    A Lipschitzian mapping with a Lipschitz constant $k < 1$ is called contraction.
\end{definition}

\vspace{2mm}
\begin{definition}\cite{kamran2017}
    Let $(X, d_\theta)$ be an extended $b$-metric space. A sequence $\{x_n\}$ in $X$ is said to be:
    \begin{enumerate}
        \item Cauchy if and only if $d(x_n, x_m) \to 0$as $n,m \to \infty$;
        \item Convergent if and only if there exist $x\in X$ such that $d(x_n, x) \to 0$ as $n \to \infty$ and we write $\lim_{n \to \infty} x_n = x$;
        \item The $b$-metric space $(X, d)$ is complete if for every Cauchy sequence is convergent.
    \end{enumerate}
\end{definition}

\vspace{2mm}
\begin{definition}\cite{Akkouchi2023}
    Let $X$ be a non empty set and let $T$ be a self-mapping of $X$. Then, for every $x \in X$, the set $O_{T}(x) := \{x, T_{x}, T^2_{x}, T^3_{x}, ...$ is called the orbit of T at $x \in X$.
\end{definition}

\vspace{2mm}
\begin{lemma}\label{lem1} \cite{kamran2017}
    Let ($X$,$d_\theta$) be an extended b-metric space. If $d_\theta$ is contiuous, then every convergent sequence has a unique limit
\end{lemma}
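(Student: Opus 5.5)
Suppose a sequence $\{x_n\}$ in $X$ converges to both $x$ and $y$, that is, $d_\theta(x_n,x)\to 0$ and $d_\theta(x_n,y)\to 0$ as $n\to\infty$. The plan is to show $d_\theta(x,y)=0$, which by condition (1) of Definition~\ref{def1} forces $x=y$.

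\emph{Step 1 (triangle inequality).} The tempting route is condition (3) of Definition~\ref{def1} with $x_n$ as the intermediate point, together with symmetry (2):
\[
d_\theta(x,y)\le \theta(x,y)\bigl[d_\theta(x,x_n)+d_\theta(x_n,y)\bigr].
\]
The factor $\theta(x,y)$ depends only on the fixed points $x,y$, so it is a finite constant in $[1,\infty)$ and does not vary with $n$. Letting $n\to\infty$, the bracket tends to $0$, so $0\le d_\theta(x,y)\le 0$ and hence $x=y$. I should check this step carefully, because it does not use continuity at all. In Definition~\ref{def1} the control function in (3) is indexed by the endpoints $(x,z)$, not by the intermediate point, so $\theta$ stays bounded here. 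If instead one had Kamran's convention $\theta(x,y)$ paired with the intermediate point, the factor would be something like $\theta(x,x_n)$, which could blow up as $n\to\infty$. That is where the continuity hypothesis becomes necessary.

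\emph{Step 2 (the continuity route).} To be robust to either convention, I would also give the argument that genuinely uses the hypothesis. Continuity of $d_\theta$ means that $x_n\to x$ and $y_n\to y$ imply $d_\theta(x_n,y_n)\to d_\theta(x,y)$. Apply this with $x_n\to x$ and the constant sequence $y_n=y$. The constant sequence converges to $y$, since $d_\theta(y,y)=0$ by (1). Hence $d_\theta(x_n,y)\to d_\theta(x,y)$. But by assumption $d_\theta(x_n,y)\to 0$, and limits of real sequences are unique. Therefore $d_\theta(x,y)=0$, and so $x=y$ by (1).

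The main obstacle is exactly this: under the paper's stated axiom, Step 1 already settles the lemma, but under the original convention of \cite{kamran2017} the factor is not uniformly controlled. I would therefore present Step 2 as the proof, because it works independently of where $\theta$ is evaluated, and remark that Step 1 alone suffices under Definition~\ref{def1} as written.
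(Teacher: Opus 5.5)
Your argument is correct. The paper gives no proof of this lemma; it is only cited from \cite{kamran2017}. So the only thing to check your proposal against is Definition~\ref{def1}.

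Your Step~1 is already a complete proof. Since $\theta$ takes values in $[1,\infty)$ and axiom (3) evaluates it at the endpoints,
\[
0\le d_\theta(x,y)\le \theta(x,y)\bigl[d_\theta(x,x_n)+d_\theta(x_n,y)\bigr]\longrightarrow 0 ,
\]
so $x=y$ without any use of continuity. In other words, the continuity hypothesis in the lemma is redundant. Your Step~2 is also valid: you apply sequential continuity to $x_n\to x$ and the constant sequence $y_n=y$, then use uniqueness of real limits. That is the argument that actually uses the hypothesis.

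What is wrong is your stated reason for preferring Step~2. There is no ``original convention'' in \cite{kamran2017} that attaches $\theta$ to the intermediate point. Kamran, Samreen and Ul~Ain's axiom reads $d_\theta(x,z)\le\theta(x,z)[d_\theta(x,y)+d_\theta(y,z)]$, exactly as in Definition~\ref{def1}. The blow-up you describe, with a factor like $\theta(x,x_n)$, belongs to settings such as controlled metric type spaces, whose triangle inequality is $d(x,y)\le\alpha(x,z)d(x,z)+\alpha(z,y)d(z,y)$. There the analogue of Step~1 does not close without extra assumptions, and continuity is what secures uniqueness of limits.

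So present Step~1 as the proof and remark that the continuity assumption can be dropped. Keep Step~2 only as the observation that the conclusion would survive under such an intermediate-point-controlled variant.
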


\vspace{2mm}
\begin{theorem}\label{Teo1} \cite{kamran2017}
    Let ($X$,$d_\theta$) be a complete extended b-metric space such that $p$ is a continuous functional. Let a self mapping $T:X\to X$ satisfy: $$p(Tx,Ty) \leq k\:p(x,y) \quad, \forall x,y\in X$$
    where $k\in [0,1)$ be such that for each $x_0 \in X$, $\lim_{n,m \to \infty} \theta (x_n,x_m) < \frac{1}{k}$, where $T^nx_0=x_n \quad ,n=1,2,3,...$. Then $T$ has precisely one fixed point $u$. Moreover for each $y\in X$, $T^ny \to u$
\end{theorem}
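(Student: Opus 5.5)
We follow the Picard iteration argument adapted to the point-dependent constant $\theta$. Fix $x_0\in X$ and set $x_n=T^nx_0$. Iterating the contraction condition gives
$$p(x_n,x_{n+1})\le k^n\,p(x_0,x_1)\qquad(n\ge 0).$$
The plan is to show that $\{x_n\}$ is Cauchy, that its limit is a fixed point, that the fixed point is unique, and finally that every orbit converges to it.

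\textbf{The sequence is Cauchy.} For $m>n$ we apply the extended triangle inequality of Definition~\ref{def1} repeatedly. We peel off one step at a time, always splitting $p(x_j,x_m)$ through $x_{j+1}$. This yields
\begin{align*}
p(x_n,x_m)\le{}& \theta(x_n,x_m)k^n p(x_0,x_1)+\theta(x_n,x_m)\theta(x_{n+1},x_m)k^{n+1}p(x_0,x_1)+\cdots\\
&+\theta(x_n,x_m)\cdots\theta(x_{m-1},x_m)k^{m-1}p(x_0,x_1).
\end{align*}
This is bounded by $p(x_0,x_1)\sum_{i\ge n} k^i\prod_{j=1}^{i}\theta(x_j,x_m)$, since each $\theta\ge 1$ lets us enlarge the products. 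Consider the series $\sum_i a_i$ with $a_i=k^i\prod_{j=1}^i\theta(x_j,x_m)$. The ratio test gives $a_{i+1}/a_i=k\,\theta(x_{i+1},x_m)$. By the hypothesis $\lim_{n,m\to\infty}\theta(x_n,x_m)<1/k$, this ratio is eventually bounded by some $q<1$, so the series converges. Hence its tails tend to $0$, which gives $p(x_n,x_m)\to 0$.

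\textbf{Main obstacle.} This step is the delicate one, because the products of $\theta$ depend on $m$ as well. I would handle it by choosing $N$ such that $k\,\theta(x_i,x_m)\le q<1$ for all $i,m\ge N$. I would then compare with a geometric series whose ratio is uniform in $m$, bounding the finitely many initial factors by a constant.

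\textbf{The limit is a fixed point.} By completeness, $x_n\to u$ for some $u\in X$. The contraction condition gives $p(x_{n+1},Tu)\le k\,p(x_n,u)\to 0$, so $x_{n+1}\to Tu$. Since $p$ is continuous, Lemma~\ref{lem1} guarantees that limits are unique, hence $Tu=u$.

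\textbf{Uniqueness and global convergence.} If $Tv=v$ as well, then $p(u,v)=p(Tu,Tv)\le k\,p(u,v)$. Since $k<1$, this forces $p(u,v)=0$, so $u=v$. Finally, the hypothesis holds for every starting point. So for any $y\in X$ the argument above shows that $T^ny$ converges to a fixed point, which by uniqueness must be $u$.
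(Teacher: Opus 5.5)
\textbf{There is a genuine gap, in the Cauchy step.} Your overall route is the Kamran et al.\ argument that the paper reproduces in its proof of Theorem~\ref{thm:theoreme-1}. Your fixed-point step (via Lemma~\ref{lem1}), the uniqueness step and the global-convergence step are fine.

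The problem starts when you enlarge $\prod_{j=n}^{i}\theta(x_j,x_m)$ to $\prod_{j=1}^{i}\theta(x_j,x_m)$. After that, $\sum_i a_i$ is a different series for each $m$. So ``its tails tend to $0$'' for fixed $m$ says nothing about $p(x_n,x_m)$ as $n,m\to\infty$ jointly. The paper's passage from $p_\theta(x_0,x_1)[S_{m-1}-S_n]$ to the Cauchy property has exactly this defect.

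Your proposed patch does not close it. The initial factors $\theta(x_1,x_m),\dots,\theta(x_{N-1},x_m)$ vary with $m$. The hypothesis only controls $\theta(x_i,x_m)$ when both indices are large, so no constant bounds these factors. Here is a concrete instance:
\begin{itemize}
\item Take $X=[0,1]$, $p(x,y)=|x-y|$ and $Tx=x/2$, so $k=\tfrac12$.
\item Let $\theta\equiv1$ except $\theta(\tfrac12,y)=\theta(y,\tfrac12)=1+1/y$ for $y\neq0$. Any $\theta\ge1$ is admissible for a metric.
\item For every starting point the double limit of $\theta(x_n,x_m)$ is $1<2=1/k$, so the hypothesis holds.
\item With $x_0=1$, however, $\theta(x_1,x_m)=1+2^m$. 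Your enlarged bound for $p(x_n,x_{n+1})$ is then at least $\tfrac12\cdot 2^{-n}(1+2^{n+1})>1$ for every $n\ge1$, so it never tends to $0$.
\end{itemize}

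\textbf{The repair is simply not to enlarge the products.} By the hypothesis, choose $q<1$ and $N\ge1$ with $k\,\theta(x_j,x_m)\le q$ for all $j,m\ge N$. For $m>n\ge N$, every factor of $\prod_{j=n}^{i}\theta(x_j,x_m)$ has both indices $\ge N$. Splitting $k^i=k^{n-1}k^{i-n+1}$ then gives
\[
p(x_n,x_m)\le p(x_0,x_1)\sum_{i=n}^{m-1}k^{n-1}\prod_{j=n}^{i}k\,\theta(x_j,x_m)\le p(x_0,x_1)\,\frac{q\,k^{n-1}}{1-q}.
\]
This bound is uniform in $m$ and tends to $0$, which is the estimate the Cauchy property actually needs. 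Use it in place of the per-$m$ ratio test, and the rest of your proof goes through unchanged.
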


\vspace{2mm}
\begin{definition}\cite{Karahan2018PARTIALBP}\label{defPBM}
    Let $X$ be a non-empty set and coeficient $s\geq 1$. A function $p:X \times X \to [0,\infty )$ is called partial b-metric (PBM) on $X$ if for all $x,y,z \in X$ it satisfies: 
\begin{enumerate}
    \item $p(x,x)=p(x,y)=p(y,y)$ if and only if $x=y$ 
    \item $p(x,x)\leq p(x,y)$
    \item $p(x,y)=p(y,x)$
    \item $p(x,z)\leq s[p(x,y)+p(y,z)]-p(y,y)$
\end{enumerate}
    The triplet of $(X,p,s)$ is called partial b-metric space (PBMS)
\end{definition}

\begin{remark} (\cite{Shukla2014}, Remarks 1 and 2)
    \begin{enumerate}
        \item In a partial $b$-metric space $(X,b)$, condition $b(x,y)=0$ implies that $x=y$; however, the converse implication does not necessarily hold.
        
        \item A partial metric space can be viewed as a special case of a partial $b$-metric space with coefficient $s=1$. Similarly, every $b$-metric space is a partial $b$-metric space with the same coefficient and zero self-distance. Nevertheless, the converse statements are not necessarily true.
    \end{enumerate}
\end{remark}

\vspace{2mm}
\begin{definition}\cite{Shukla2014}
    Let $(X, p)$ be a partial $b$-metric space with coefficient $s$. Let $\{x_n\}$ be any sequence in $X$ and $x \in X$. Then:
    \begin{enumerate}
        \item The sequence $\{x_n\}$ is said to be convergent with respect to $\tau_b$ and converges to $x$, if $\lim_{n \to \infty} p(x_n, x) = p(x,x)$.
        
        \item Sequence $\{x_n\}$ is said to be Cauchy sequence in $(X, p)$ if $\lim_{n,m \to \infty}p(x_n,x_m)$ exist and is finite.
        \item $(X, p)$ is said to be a complete partial $b$-metric space if for every Cauchy sequence $\{x_n\}$ in $X$ there exist $x \in X$ such that $$\lim_{n,m \to \infty} p(x_n, x_m) = \lim_{n \to \infty} p(x_n, x) = p(x,x)$$
    \end{enumerate}
\end{definition}

\vspace{2mm}
\begin{definition}\cite{Dung2017}
   Let $(X,p)$ be a partial $b$-metric space.

    \begin{enumerate}
        \item A sequence $\{x_n\}$ is called a $0$-Cauchy sequence if 
        $$\lim_{n,m \to \infty} p(x_n,x_m) = 0$$.
        
        \item $(X,p)$ is called $0$-complete if for each $0$-Cauchy sequence 
        $\{x_n\}$ in $X$, there exists $x \in X$ such that
        $$\lim_{n,m \to \infty} p(x_n,x_m) = \lim_{n \to \infty} p(x_n,x) = p(x,x) = 0$$.
    \end{enumerate} 
\end{definition}

\begin{example}
    Let $X = (0, \infty)$ and let $b>1$ be a fixed real constant. Define a mapping $b : X \times X \to X$ by $$p(x,y) =[\max\{x,y\}]^b + |x-y|^b, \quad \forall x,y \in X.$$ Show that $(X, p)$ is partial $b$-metric space with coefficient $s = 2^b$
\end{example} 

\section{Result and Discussion}

\vspace{.5cc}
Recently, the concept of partial $b$-metric space was introduced by Shukla in \cite{Shukla2014} and further developed by Van Dung and Hang in \cite{Dung2017}. Motivated by these developments, the partial extended $b$-metric is introduced as a natural of the existing structure.
\vspace{2mm}
\begin{definition}
    Let $X$ be a non-empty set and $\theta:X \times X \to [1,\infty )$. A function $p_{\theta}:X \times X \to [0,\infty )$ is called partial extended b-metric (PEBM) on $X$ if for all $x,y,z \in X$ it satisfies: 
\begin{enumerate}
    \item $p_\theta(x,x)=p_\theta(x,y)=p_\theta(y,y)$ if and only if $x=y$ 
    \item $p_\theta(x,x)\leq p_\theta(x,y)$
    \item $p_\theta(x,y)=p_\theta(y,x)$
    \item $p_\theta(x,z)\leq \theta (x,z)[p_\theta(x,y)+p_\theta(y,z)]-p_\theta(y,y)$
\end{enumerate}
    The triplet of $(X,p_\theta,\theta)$ is called partial extended b-metric space (PEBMS)
\end{definition}
\vspace{5mm}
\begin{example}
    Let $X = [0,1]$. Define $\theta : X \times X \to [1, \infty)$ by $\theta(x,y) = x+y+1, \forall x, y \in X$. Define $p_{\theta} : X \times X \to \infty [0, \infty$ by $p_{\theta}(x,y) = |x-y| + x$. Then $(X, p_{\theta}, \theta)$ is a partial extended $b$-metric space.
\end{example}
\vspace{5mm}
\begin{remark}(Remarks 1 and 2)
    \begin{enumerate}
        \item The notion of a partial extended $b$-metric extends the concept of an extended $b$-metric by allowing non–zero self-distance. In particular, while an extended $b$-metric satisfies $d_\theta(x,x)=0$ for all $x\in X$, a partial extended $b$-metric admits $p_\theta(x,x)\ge 0$ and modifies the triangle inequality by the term $-p_\theta(y,y)$. Hence, every extended $b$-metric space can be regarded as a special case of a partial extended $b$-metric space when the self-distance is zero.
        \item The concept of a partial extended $b$-metric space (PEBMS) used in this paper differs from the notion of an extended partial $b$-metric space (or partial $p$-metric space). The function of partial extended $b$-metric spcae $\theta : X \times X \to [1, \infty)$ acts as control acts as a control function in the generalized triangle inequality, whereas in the extended partial $b$-metric spcae (or partial p-metric space), the structure is governed by a function $\Omega : [0, \infty) \to [0, \infty)$ applied to the metric expression. Hence, these two notions are not equivalent and lead to different analytical frameworks.
    \end{enumerate} 
\end{remark}

\vspace{.5cc}

This definition is inspired by the notion of convergent sequences in partial $b$-metric spaces \cite{Shukla2014}, and is formulated by adapting and extending the underlying ideas to suit the present context.
\vspace{2mm}
\begin{definition}
    Let ($X$,$p_\theta$,$\theta$) be a partial extended b-metric space 
    \begin{enumerate}
        \item A sequence $\{x_n\}$ is called convergent to $x$ in $X$, written $\lim_{n\to \infty} x_n =x$ if 
        $$\lim_{n\to \infty} p_\theta(x_n,x)=p_\theta(x,x)$$
        \item The sequence $\{x_n\}$ is said to be Cauchy sequence in $(X, p_\theta, \theta)$ if $\lim_{n,m \to \infty}p_\theta(x_n,x_m)$ exist and is finite.
        \item $(X, p_\theta, \theta)$ is said to be a complete partial $b$-metric space if for every Cauchy sequence $\{x_n\}$ in $X$ there exist $x \in X$ such that $$\lim_{n,m \to \infty} p_\theta(x_n, x_m) = \lim_{n \to \infty} p_\theta(x_n, x) = p_\theta(x,x)$$
    \end{enumerate}
\end{definition}
\begin{example}
    Let $X = [0,1]$. Define $\theta : X \times X \to [1, \infty)$ by $\theta(x,y) = 1 + x + y$. Define $p_{\theta} : X \times X \to [0, \infty]$ by $p_{\theta} = \max\{x,y\}$. Consider the sequence $\{x_n\} \subset X$ with $x_n = \frac{1}{n}$. Then $x_n \to 0$ in $(X, p_{\theta}, \theta)$, the sequence $\{x_n\}$ is Cauchy and $(X, p_{\theta}, \theta)$ is a complete partial extended $b$-metric space.
\end{example}
\vspace{0.2mm}

The definition below is inspired by the concept of a $0$-Cauchy sequence introduced by Van Dung \cite{Dung2017}, and has been adapted to fit the context discussed in this work.
\begin{definition}
Let $(X,p_\theta,\theta)$ be a partial extended $b$-metric space with the functional $\theta$.
\begin{enumerate}
    \item A sequence $\{x_n\}$ is called a $0$-Cauchy sequence if 
        $$\lim_{n,m \to \infty} p_\theta(x_n,x_m) = 0$$.
        
    \item $(X,p_\theta,\theta)$ is called $0$-complete if for each $0$-Cauchy sequence 
        $\{x_n\}$ in $X$, there exists $x \in X$ such that
        $$\lim_{n,m \to \infty} p_\theta(x_n,x_m) = \lim_{n \to \infty} p_\theta(x_n,x) = p_\theta(x,x) = 0$$.
\end{enumerate}
\end{definition}
\begin{example}
    Let $X = \mathbb{R}^+ \cup \{0\}$ (i.e., $[0, \infty)$). Define $p_{\theta} : X \times X \to [0,\infty)$ by $p_{\theta}(x,y) = |x-y| + \min\{x,y\} $ and define $\theta : X \times X \to [1, \infty)$ by $\theta(x,y) = 1 + x+ y$. Consider the sequence $x_n = \frac{1}{n^2}$. Then, $p_{\theta} (x_n, x_m) \to 0$ as $n,m \to \infty$, so $\{x_n\}$ is a $0$-Cauchy sequence in $(X, p_{\theta}, \theta)$. However, $x_n \to 0 \notin X$, hence $(X, p_{\theta}, \theta)$ is not $0$-complete. 
\end{example}
\vspace{3mm}
\begin{proposition}\label{lemme1}
    Let $(X, p_\theta, \theta)$ be a partial extended b-metric space. Define a function $d_p : X \times X \to [0,\infty)$ by
    \[
    d_p(x,y) =
    \begin{cases}
    0, & x = y, \\
    p_\theta(x,y), & x \ne y.
    \end{cases}
    \]
    Then $d_p$ is an extended b-metric on $X$ with the same control function $\theta$.
\end{proposition}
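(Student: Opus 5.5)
The plan is to verify the three axioms of Definition~\ref{def1} for $d_p$ directly from the four axioms of a partial extended $b$-metric, keeping the same $\theta$. Symmetry (axiom 2) is immediate: for $x\neq y$ it follows from axiom 3 of a PEBM, and for $x=y$ both sides are $0$. Nonnegativity is inherited from $p_\theta$.

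For axiom 1, the direction $x=y\Rightarrow d_p(x,y)=0$ holds by definition. For the converse, I would argue by contradiction. Suppose $x\neq y$ but $d_p(x,y)=p_\theta(x,y)=0$. Axiom 2 of a PEBM gives $0\le p_\theta(x,x)\le p_\theta(x,y)=0$, and with symmetry also $0\le p_\theta(y,y)\le p_\theta(y,x)=0$. Hence $p_\theta(x,x)=p_\theta(x,y)=p_\theta(y,y)=0$, and axiom 1 of a PEBM forces $x=y$, a contradiction.

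The main work is the extended triangle inequality $d_p(x,z)\le \theta(x,z)[d_p(x,y)+d_p(y,z)]$. This is the step where care is needed, because replacing $p_\theta$ by $d_p$ can remove self-distance terms on the right-hand side. I would split into cases.
\begin{itemize}
\item If $x=z$, the left side is $0$ and there is nothing to prove.
\item If $x\neq z$ and $y\notin\{x,z\}$, all three values of $d_p$ coincide with $p_\theta$. Axiom 4 gives
\[
p_\theta(x,z)\le \theta(x,z)[p_\theta(x,y)+p_\theta(y,z)]-p_\theta(y,y),
\]
and dropping the nonnegative term $p_\theta(y,y)$ yields the claim.
\item If $x\neq z$ and $y=x$, the right side equals $\theta(x,z)\,p_\theta(x,z)$. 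This is at least $p_\theta(x,z)=d_p(x,z)$ because $\theta\ge 1$. Note that we do not use axiom 4 here, so the lost term $p_\theta(x,x)$ causes no trouble. The case $y=z$ is symmetric.
\end{itemize}
These cases exhaust all possibilities, so $d_p$ is an extended $b$-metric with control function $\theta$.
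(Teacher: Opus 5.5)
Your proof is correct and follows the same route as the paper: check the three axioms directly, and for the triangle inequality drop the nonnegative term $p_\theta(y,y)$ when the three points are distinct, treating coincident points separately. You also spell out what the paper leaves implicit: why $p_\theta(x,y)=0$ forces $x=y$ (via axiom 2 and then axiom 1), and why the case $y\in\{x,z\}$ holds (because $\theta\ge 1$).
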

\begin{proof}
    First, it is clear that $d_p : X \times X \to [0,\infty)$.
    (1) If $d_p(x,y)=0$, then either $x=y$ or $p_\theta(x,y)=0$. 
    From the properties of $p_\theta$, this implies $x=y$. Conversely, if $x=y$, then $d_p(x,y)=0$.
    (2) Symmetry follows directly from the symmetry of $p_\theta$, that is,
    \[
    d_p(x,y)=d_p(y,x).
    \]
    (3) For all $x,y,z \in X$, we consider two cases.
    If any two of $x,y,z$ coincide, the inequality holds trivially.
    If $x \ne y \ne z$, then
    \[
    d_p(x,z) = p_\theta(x,z)
    \le \theta(x,z)\big[p_\theta(x,y) + p_\theta(y,z)\big] - p_\theta(y,y).
    \]
    Since $p_\theta(y,y) \ge 0$, we obtain
    \[
    d_p(x,z) \le \theta(x,z)\big[d_p(x,y) + d_p(y,z)\big].
    \]
    Thus, $d_p$ satisfies the extended b-metric inequality. Therefore, $d_p$ is an extended b-metric on $X$.
\end{proof}

\vspace{.5cc}
The following theorem is inspired by theorem ~\ref{Teo1} and combines it with the $0$-complete partial b-metric.
\vspace{3mm}
\begin{theorem} \label{thm:theoreme-1}
Let $(X,p_\theta,\theta)$ be a $0$-complete partial extended $b$-metric space where $p_\theta$ is a continuous functional. Let $T:X\to X$ be a self-mapping satisfying
\begin{equation}\label{eq1}
p(Tx,Ty) \le k\,p(x,y), \quad \forall x,y \in X,
\end{equation}\label{eq1}
where $k \in [0,1)$. For each $x_0 \in X$, define $x_n = T^n x_0$. If
\begin{equation}\label{eq2}
\lim_{n,m \to \infty} \theta(x_n,x_m) < \frac{1}{k},
\end{equation}
then $T$ has a unique fixed point $u \in X$. Moreover, for each $y \in X$, $T^n y \to u$.
\end{theorem}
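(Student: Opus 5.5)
We follow the scheme of Theorem~\ref{Teo1}, adapted to the non-zero self-distance and to $0$-completeness. Fix $x_0\in X$ and set $x_n=T^nx_0$. If $x_n=x_{n+1}$ for some $n$, then $x_n$ is already a fixed point, so assume consecutive terms are distinct. Iterating \eqref{eq1} gives
\[
p_\theta(x_n,x_{n+1})\le k^n p_\theta(x_0,x_1).
\]

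\textbf{Step 1: $\{x_n\}$ is $0$-Cauchy.} For $m>n$, apply axiom (4) repeatedly, splitting off one step at a time, and drop the nonnegative terms $-p_\theta(y,y)$. This gives
\[
p_\theta(x_n,x_m)\le p_\theta(x_0,x_1)\sum_{i=n}^{m-1} k^i\prod_{j=n+1}^{i}\theta(x_j,x_m),
\]
with the empty product equal to $1$. Shifting the index makes the right-hand side dominated by the tail of the series
\[
\sum_i k^i\prod_{j=1}^{i}\theta(x_j,x_m).
\]
By \eqref{eq2}, the ratio test applies: the ratio of consecutive terms is $k\,\theta(x_{i+1},x_m)$, whose limit is $<1$. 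So the series converges and its tails tend to $0$, giving $\lim_{n,m}p_\theta(x_n,x_m)=0$.

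This is the step I expect to be the main obstacle. The ratio test is applied to a series whose terms also depend on $m$, exactly as in the proof of Theorem~\ref{Teo1}. I would handle it the same way: read \eqref{eq2} as giving a uniform bound $\theta(x_i,x_m)\le c<1/k$ for all large $i,m$. Then the products are dominated by a geometric series with ratio $kc<1$, with constants independent of $m$, and the estimate becomes uniform.

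\textbf{Step 2: existence.} By $0$-completeness there is $u\in X$ with $\lim p_\theta(x_n,u)=p_\theta(u,u)=0$. Then \eqref{eq1} gives $p_\theta(x_{n+1},Tu)\le k\,p_\theta(x_n,u)\to0$. Letting $n\to\infty$ and using continuity of $p_\theta$ gives $p_\theta(u,Tu)=0$. By axiom (2), $p_\theta(u,u)=p_\theta(Tu,Tu)=0=p_\theta(u,Tu)$, so axiom (1) yields $Tu=u$. Alternatively, apply axiom (4) to the triple $u,x_{n+1},Tu$ and pass to the limit using \eqref{eq2}.

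\textbf{Step 3: uniqueness and convergence of all orbits.} If $Tv=v$, then $p_\theta(u,v)\le k\,p_\theta(u,v)$, so $p_\theta(u,v)=0$. Axiom (2) then forces the self-distances of $u$ and $v$ to vanish too, and axiom (1) gives $u=v$. Finally, for any $y\in X$, apply the same argument with $x_0=y$; \eqref{eq2} is assumed for every starting point. This produces a fixed point, which by uniqueness equals $u$. Alternatively, directly: $p_\theta(T^ny,u)=p_\theta(T^ny,T^nu)\le k^n p_\theta(y,u)\to0=p_\theta(u,u)$, so $T^ny\to u$.
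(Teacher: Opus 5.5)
Your proposal is correct and follows the paper's proof essentially step for step: geometric decay of $p_\theta(x_n,x_{n+1})$, chained use of axiom (4) with the self-distance terms discarded, $0$-completeness, axiom (4) on a triple through $x_{n+1}$ to get $p_\theta(u,Tu)=0$, and the contraction for uniqueness; your uniform bound $\theta(x_j,x_m)\le c<1/k$ for $j,m\ge N$ is what is needed to make the paper's ``ratio test for each $m$'' rigorous, and you also verify $Tu=u$ via axioms (1)--(2) and prove $T^ny\to u$, which the paper leaves implicit. Two small fixes are needed in Step 1: the coefficient of $p_\theta(x_i,x_{i+1})$ produced by axiom (4) is $\prod_{j=n}^{\min(i,m-2)}\theta(x_j,x_m)$, so it carries a factor $\theta(x_n,x_m)$ that you dropped (harmless, since it is at most $c$), and you should apply the uniform bound to these products directly for $n\ge N$, giving $p_\theta(x_n,x_m)\le \frac{c\,k^n}{1-kc}\,p_\theta(x_0,x_1)$, rather than after enlarging them to $\prod_{j=1}^{i}\theta(x_j,x_m)$, whose factors with $j<N$ are not controlled as $m\to\infty$.
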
\vspace{0.5mm}

\begin{proof}
Choose an arbitrary $x_0 \in X$ and define the iterative sequence $\{x_n\}$ by
\[
x_n = T^n x_0, \qquad n=0,1,2,\ldots
\]
Using the contractive condition
\[
p_\theta(Tx,Ty) \le k\,p_\theta(x,y),
\]
we obtain
\begin{equation}
    p_\theta(x_n,x_{n+1}) = p_\theta(Tx_{n-1},Tx_n)
    \le k\,p_\theta(x_{n-1},x_n)
    \le \cdots \le k^n p_\theta(x_0,x_1).
\end{equation}
Now, using the triangle inequality of the partial extended $b$-metric, for $m>n$ we have
    \begin{align}\label{eq5}
    p_\theta(x_n,x_m)
    &\le \theta(x_n,x_m)\big[p_\theta(x_n,x_{n+1}) + p_\theta(x_{n+1},x_m)\big]
    - p_\theta(x_{n+1},x_{n+1}) \nonumber \\
    &\le \theta(x_n,x_m)p_\theta(x_n,x_{n+1})
    + \theta(x_n,x_m)p_\theta(x_{n+1},x_m).
    \end{align}
Applying this inequality repeatedly and using \eqref{eq5}, we obtain
    \begin{align}
    p_\theta(x_n,x_m)
    &\le \theta(x_n,x_m)k^n p_\theta(x_0,x_1)
    + \theta(x_n,x_m)\theta(x_{n+1},x_m)k^{n+1} p_\theta(x_0,x_1) \nonumber \\
    &\quad + \cdots +
    \theta(x_n,x_m)\theta(x_{n+1},x_m)\cdots\theta(x_{m-1},x_m)
    k^{m-1} p_\theta(x_0,x_1).
    \end{align}
Hence,
    \[
    p_\theta(x_n,x_m) \le p_\theta(x_0,x_1)
    \sum_{i=n}^{m-1}
    \left(
    \prod_{j=n}^{i}\theta(x_j,x_m)
    \right) k^i .
    \]
Since $\lim_{n,m \to \infty}\theta(x_{n+1},x_m)k < 1,$
    the series
    \[
    \sum_{n=1}^{\infty} k^n \prod_{i=1}^{n} \theta(x_i,x_m)
    \]
    converges by the ratio test for each $m \in \mathbb{N}$. Let
    \[
    S=\sum_{n=1}^{\infty} k^n \prod_{i=1}^{n}\theta(x_i,x_m), 
    \qquad
    S_n=\sum_{j=1}^{n} k^j \prod_{i=1}^{j}\theta(x_i,x_m).
    \]
Thus for $m>n$, from the previous inequality we obtain
    \[
    p_\theta(x_n,x_m)
    \le
    p_\theta(x_0,x_1)\,[S_{m-1}-S_n].
    \]
Letting $n \to \infty$ we conclude that $\{x_n\}$ is a $0$-Cauchy sequence.Since $(X,p_\theta,\theta)$ is $0$-complete, there exists $\xi \in X$ such that
    \[
    x_n \to \xi .
    \]
Now we show that $\xi$ is a fixed point of $T$. Using the triangle inequality of the partial extended $b$-metric, we have
    \begin{align*}
    p_\theta(T\xi,\xi)
    &\le \theta(T\xi,\xi)\big[p_\theta(T\xi,x_n)+p_\theta(x_n,\xi)\big]
    - p_\theta(x_n,x_n) \\
    &\le \theta(T\xi,\xi)\big[p_\theta(T\xi,Tx_{n-1})+p_\theta(x_n,\xi)\big] \\
    &\le \theta(T\xi,\xi)\big[k\,p_\theta(\xi,x_{n-1})+p_\theta(x_n,\xi)\big].
    \end{align*}
Taking the limit as $n \to \infty$, we obtain
    \[
    p_\theta(T\xi,\xi)=0.
    \]
Hence $T\xi=\xi$, and therefore $\xi$ is a fixed point of $T$. Finally, the uni queness of the fixed point follows directly from the contractive condition \eqref{eq1}, since $k<1$.
\end{proof}

\vspace{.5cc}
Before presenting the following theorem, we note that its proof is based on a Kannan-type contraction condition \cite{Kannan1968}, adapted from the previous result.
\begin{theorem}
    Let $(X,p_\theta ,\theta)$ be a  $0$-complete partial extended b-metric space. Let $T : X \to X$ satisfies. 
    \begin{equation} \label{Kannan}
        p_\theta (Tx,Ty) \leq k \left[ p_\theta(x,Tx) + p_\theta(y,Ty) \right]
    \end{equation}
    with $\theta(Tx,x) < \frac{1}{k}$ for all $x,y \in X$, where the constant $k \in [0,\frac{1}{2})$. Then $T$ has a unique fixed point $u\in X$. Moreover $p_\theta(u,u)=0$ and $\lim_{n\to \infty} T^nx_0 =u $ for all $x_0\in X$.
\end{theorem}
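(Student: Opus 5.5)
We follow the standard Kannan scheme, adapted to the partial extended setting. Fix $x_0\in X$ and set $x_n=T^nx_0$; if $x_n=x_{n+1}$ for some $n$ we already have a fixed point, so assume otherwise.

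\textbf{Step 1: geometric decay of consecutive distances.} Apply \eqref{Kannan} with $x=x_{n-1}$, $y=x_n$:
\[
p_\theta(x_n,x_{n+1})\le k\big[p_\theta(x_{n-1},x_n)+p_\theta(x_n,x_{n+1})\big].
\]
Setting $\lambda=\frac{k}{1-k}\in[0,1)$, this gives
\[
p_\theta(x_n,x_{n+1})\le \lambda\,p_\theta(x_{n-1},x_n)\le \cdots\le \lambda^n p_\theta(x_0,x_1).
\]

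\textbf{Step 2: the sequence is $0$-Cauchy.} For $m>n$, apply \eqref{Kannan} directly to the pair $(x_{n-1},x_{m-1})$, not the triangle inequality:
\[
p_\theta(x_n,x_m)\le k\big[p_\theta(x_{n-1},x_n)+p_\theta(x_{m-1},x_m)\big]\le k\big(\lambda^{n-1}+\lambda^{m-1}\big)p_\theta(x_0,x_1)\to 0 .
\]
This avoids the products of $\theta$-values that appeared in Theorem~\ref{thm:theoreme-1}, and I regard it as the key simplification. So $\{x_n\}$ is $0$-Cauchy. By $0$-completeness there is $u\in X$ with
\[
\lim_{n} p_\theta(x_n,u)=p_\theta(u,u)=0 .
\]

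\textbf{Step 3: $u$ is a fixed point.} This is the step I expect to be the main obstacle, because it is where the hypothesis $\theta(Tx,x)<1/k$ must be used. Use the triangle inequality with the control function evaluated at $(Tu,u)$:
\[
p_\theta(Tu,u)\le \theta(Tu,u)\big[p_\theta(Tu,x_{n+1})+p_\theta(x_{n+1},u)\big]-p_\theta(x_{n+1},x_{n+1}).
\]
Then bound $p_\theta(Tu,Tx_n)\le k\big[p_\theta(u,Tu)+p_\theta(x_n,x_{n+1})\big]$. Letting $n\to\infty$, and using Step 1 together with $p_\theta(x_{n+1},u)\to0$, gives
\[
p_\theta(Tu,u)\le \theta(Tu,u)\,k\,p_\theta(u,Tu).
\]
Since $\theta(Tu,u)k<1$, it follows that $p_\theta(u,Tu)=0$. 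Axiom (2) then gives $p_\theta(u,u)=p_\theta(Tu,Tu)=0$, so axiom (1) forces $Tu=u$. No continuity of $p_\theta$ is needed, because only upper bounds pass to the limit.

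\textbf{Step 4: uniqueness and the remaining claims.} For any fixed point $v$, \eqref{Kannan} gives $p_\theta(v,v)=p_\theta(Tv,Tv)\le 2k\,p_\theta(v,v)$. Since $2k<1$, this yields $p_\theta(v,v)=0$, which proves the claim $p_\theta(u,u)=0$. If $u,v$ are both fixed, then
\[
p_\theta(u,v)\le k\big[p_\theta(u,u)+p_\theta(v,v)\big]=0,
\]
so axiom (1) gives $u=v$. Finally, $x_0$ was arbitrary and every such orbit converges to a fixed point, which is $u$ by uniqueness. Hence $T^nx_0\to u$ for all $x_0\in X$.
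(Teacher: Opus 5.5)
Your proposal is correct and follows essentially the same route as the paper. It uses geometric decay of $p_\theta(x_n,x_{n+1})$ with ratio $\frac{k}{1-k}$, obtains the $0$-Cauchy property by applying the Kannan condition directly to $(x_{n-1},x_{m-1})$, and gets the fixed-point step from the triangle inequality at $(Tu,x_{n+1},u)$ together with $\theta(Tu,u)k<1$. The only differences are minor: for uniqueness you first show every fixed point has zero self-distance (take $x=y=v$), whereas the paper uses $p_\theta(u,u)=0$ and $p_\theta(w,w)\le p_\theta(u,w)$ to get $p_\theta(u,w)\le k\,p_\theta(u,w)$; you also spell out $p_\theta(Tu,u)=0\Rightarrow Tu=u$ and the convergence of every orbit, which the paper leaves implicit.
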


\begin{proof}
    Let us consider $x_0\in X$ arbitrarily. Define sequence $(x_n)$ in $X$ with the initial $x_0$ and $x_n = Tx_{n-1}$, $n=0,1,2, \dots $. From inequality $p_\theta (Tx,Ty) \leq k \left[ p_\theta(x,Tx) + p_\theta(y,Ty) \right]$ 
    setting $x=x_{m-1}$ and $y=x_{n-1}$ we have 
    \begin{equation}\label{Theo2: ineq1}
        p_\theta (x_m,x_n) \leq k \left[ p_\theta(x_m,x_{m-1}) + p_\theta(x_n,x_{n-1}) \right] \quad , \forall m,n \in \mathbb{N}
    \end{equation}
    in a similar way setting $x=x_{n}$ and $y=x_{n-1}$ we have 
    \begin{equation*}\label{Theo2: ineq2}
         p_\theta (x_{n+1},x_{n}) \leq k \left[ p_\theta(x_{n+1},x_{n}) + p_\theta(x_n,x_{n-1}) \right] \quad , \forall n \in \mathbb{N}       
    \end{equation*}
    $$ (1-k) \: p_\theta (x_{n+1},x_{n})  \leq k\: p_\theta(x_n,x_{n-1})  \quad , \forall n \in \mathbb{N}$$
    $$ \: p_\theta (x_{n+1},x_{n})  \leq \frac{k}{1-k}\: p_\theta(x_n,x_{n-1})  \quad , \forall n \in \mathbb{N}$$
    $$ \: p_\theta (x_{n+1},x_{n})  \leq \frac{k}{1-k}\: p_\theta(x_n,x_{n-1}) \leq \left( \frac{k}{1-k}\right)^2 p_\theta(x_{n-1},x_{n-2}) \quad , \forall n \in \mathbb{N}$$
    By repeating the same recursive step, we obtain. 
    \begin{equation} \label{Theo2: ineq3}
        \: p_\theta (x_{n+1},x_{n})  \leq \left( \frac{k}{1-k}\right)^n\: p_\theta(x_1,x_{0})  \quad , \forall n \in \mathbb{N}
    \end{equation}
    Since $k \in [0, \frac{1}{2})$, then $\left( \frac{k}{1-k}\right) \in [0,1)$ and as $n\to \infty$ we have $$\lim_{n\to \infty} p_\theta(x_{n+1}, x_n)=0$$
    Therefore by taking the limit $m,n \to \infty $ in inequality \eqref{Theo2: ineq1} we have $$\lim_{m,n\to \infty} p_\theta(x_{m}, x_n)=0$$
    which means the sequence $(x_n)$ is 0-Cauchy. Since $(X,p_\theta , \theta)$ is a $0$-complete, $(x_n)$ converges to a value called $u \in X$ such that $\lim_{n\to \infty} p_\theta(x_{n}, u)=p_\theta(u, u)=0$. Based on the triangle inequality of the partial extended b-metric, we obtain.
    \begin{equation} \label{Theo2: ineq 4}
        p_\theta(Tu,u) \leq \theta(Tu,u) [p_\theta(Tu, x_{n+1})+p_\theta(x_{n+1},u)]- p_\theta(x_{n+1},x_{n+1})
    \end{equation}
    From inequality $p_\theta (Tx,Ty) \leq k \left[ p_\theta(x,Tx) + p_\theta(y,Ty) \right]$ 
    setting $x=u$ and $y=x_{n}$ we have
    \begin{equation}\label{Theo2: ineq5}
        p_\theta (Tu,Tx_n) \leq k \left[ p_\theta(Tu,u) + p_\theta(x_n,x_{n+1}) \right] \quad , \forall m,n \in \mathbb{N}
    \end{equation}  
    Combine the inequality \eqref{Theo2: ineq 4} and \eqref{Theo2: ineq5} then we have.
    \begin{align*}
        p_\theta(Tu,u) \leq \theta(Tu,u) \cdot k \cdot p_\theta(Tu,u) + \theta(Tu,u) \cdot k \cdot p_\theta(x_{n+1}, x_n) \\ + \theta(Tu,u) p_\theta(x_{n+1},u) - p_\theta(x_{n+1},x_{n+1})
    \end{align*}
    So that for each $n\in \mathbb{N}$. 
    \begin{align*}
        [1- \theta(Tu,u)k] \cdot p_\theta(Tu,u) \leq \theta(Tu,u) \cdot k \cdot p_\theta(x_{n+1}, x_n) +\theta(Tu,u) \cdot \\ p_\theta (x_{n+1},u) - p_\theta(x_{n+1},x_{n+1})
    \end{align*}
    Therefore by limiting $n\to \infty $, we have $p_\theta(Tu,u)=0$ since $[1- \theta(Tu,u)k] \neq 0$. Consequently, $T$ has a fixed point. For the uniqueness let $w \in X$ is a fixed point of $T$. setting $x=u$ and $y=w$ in inequality $p_\theta(Tx,Ty) \leq k [p_\theta(x,Tx) + p_\theta(y,Ty)]$ yields $p_\theta(u,w) \leq k[p_\theta(u,u)+ p_\theta(w,w)]=k p_\theta(w,w)$. From the properties of a partial extended b-metric space, we have $p(w,w) \leq p_\theta(u,w)$. Consequently we obtain $p_\theta(u,w) \leq k p_\theta(w,w) \leq k p_\theta(u,w)$ then $p_\theta(u,w)=p_\theta(w,w)=p_\theta(w,w)=0$ since $k\in [0,\frac{1}{2})$, it shows that the fixed point $T$ is unique. 
\end{proof}
\vspace{5mm}

\begin{example}
    Let $X = [0, \infty)$. Define $p_{\theta} : X \times X \to [0, \infty)$ by $p_{\theta}(x,y) = \max \{x,y\}$, and let $\theta : X \times X \to [1, \infty)]$ be given by $\theta(x,y) = 1 + \frac{xy}{1 + x + y}$. Consider the mapping $T : X \to X$ defined by $Tx = \frac{x}{4}$. Show that T satisfies Theorem 3.10.
\end{example}
\vspace{0.5cm}
The following theorem is demonstrated using a modified version of the Kannan type condition introduced in the previous theorem.

\begin{theorem}
    Let $(X,p_\theta ,\theta)$ be a $0$-complete partial extended b-metric space. Let $T:X \to X$ be a mapping having the properties $\theta(Tx,x)< \frac{1}{k}$ and $$p_\theta(Tx,Ty)\leq k[p_\theta(x,Ty)+p_\theta(y,Ty)]$$ for all $x,y \in X$ where $k$ is constant in $[0, \frac{1}{2})$. If there exist an element $x_0 \in X$ such that $\lim_{n\to \infty }n \:p_\theta(T^nx_0,T^nx_0)=0$, Then $T$ has a unique fixed point $u \in X$. Moreover $u = \lim_{n\to \infty} T^nx_0$ and $p_\theta(u,u)=0$
\end{theorem}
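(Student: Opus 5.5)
We follow the scheme of the previous (Kannan-type) theorem: build the Picard sequence $x_n=T^nx_0$ from the distinguished point $x_0$, show it is $0$-Cauchy, pass to the limit by $0$-completeness, and then identify the limit as the unique fixed point. Write
\[
a_n=p_\theta(x_n,x_{n+1}),\qquad c_n=p_\theta(x_n,x_n),\qquad b(n,m)=p_\theta(x_n,x_m).
\]
The hypothesis $\lim_{n\to\infty} n\,c_n=0$ will only be used in the weaker form $c_n\to 0$. This is exactly what is needed to control the self-distances that the contractive condition brings in.

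\textbf{Step 1: consecutive terms.} Put $x=x_n$, $y=x_{n-1}$ in the contractive condition. This gives
\[
a_n\le k\,[\,c_n+a_{n-1}\,].
\]
Iterating, we get
\[
a_n\le k^n a_0+\sum_{j=1}^{n}k^{\,n-j+1}c_j.
\]
Since $k<1$ and $c_j\to 0$, this discrete convolution tends to $0$ (split the sum at $j=n/2$). Hence $a_n\to 0$.

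\textbf{Step 2: the $0$-Cauchy property.} This is the step we expect to be the main obstacle, because the triangle inequality carries the uncontrolled factor $\theta(x_n,x_m)$. We therefore avoid it and use the contractive condition alone. For $n>m$, put $x=x_{n-1}$, $y=x_{m-1}$:
\[
b(n,m)\le k\,b(n-1,m)+k\,a_{m-1}.
\]
Iterating in $n$ down to $m$ (with $m$ fixed) yields the uniform bound
\[
b(n,m)\le k^{\,n-m}c_m+\frac{k}{1-k}\,a_{m-1}\le c_m+\frac{k}{1-k}\,a_{m-1}.
\]
The right-hand side tends to $0$ as $m\to\infty$ by Step 1 and $c_m\to0$. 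By symmetry of $p_\theta$, $\lim_{n,m\to\infty}p_\theta(x_n,x_m)=0$, so $\{x_n\}$ is $0$-Cauchy. By $0$-completeness there is $u\in X$ with $\lim_{n}p_\theta(x_n,u)=p_\theta(u,u)=0$, so $u=\lim_n T^nx_0$ and $p_\theta(u,u)=0$.

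\textbf{Step 3: $u$ is a fixed point.} As in the proof of the previous theorem, start from the triangle inequality
\[
p_\theta(Tu,u)\le \theta(Tu,u)\big[p_\theta(Tu,x_{n+1})+p_\theta(x_{n+1},u)\big]-c_{n+1}.
\]
Bound the first term by the contractive condition with $x=u$, $y=x_n$:
\[
p_\theta(Tu,x_{n+1})\le k\,[\,p_\theta(u,x_{n+1})+a_n\,]\to 0.
\]
Letting $n\to\infty$, the whole right-hand side tends to $0$, because $\theta(Tu,u)$ is a fixed finite number (in fact $<1/k$). Hence $p_\theta(Tu,u)=0$. Property (2) of a partial extended $b$-metric then forces $p_\theta(u,u)=p_\theta(Tu,Tu)=p_\theta(Tu,u)=0$, and property (1) gives $Tu=u$.

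\textbf{Step 4: uniqueness.} Let $Tw=w$. Using the contractive condition with $x=u$, $y=w$, and then property (2) in the form $p_\theta(w,w)\le p_\theta(u,w)$, we get
\[
p_\theta(u,w)\le k\,[\,p_\theta(u,w)+p_\theta(w,w)\,]\le 2k\,p_\theta(u,w).
\]
Since $2k<1$, this gives $p_\theta(u,w)=0$, so $p_\theta(u,u)=p_\theta(u,w)=p_\theta(w,w)=0$ and $u=w$ by property (1).
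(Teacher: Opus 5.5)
Your proof is correct and takes essentially the same route as the paper: your iterated bound $a_n\le k^na_0+\sum_{j=1}^n k^{n-j+1}c_j$, your estimate $b(n,m)\le k^{n-m}c_m+\frac{k}{1-k}a_{m-1}$, the triangle-inequality argument with the substitution $x=u$, $y=x_n$, and the uniqueness argument via property (2) all mirror the paper's steps. The differences are minor refinements in your favor: your direct splitting of the convolution shows that $c_n\to0$ suffices, whereas the paper uses a Ces\`aro-type product lemma together with $nc_n\to0$; and you correctly note that $\theta(Tu,u)<1/k$ is not needed in Step 3, since the bound for $p_\theta(Tu,x_{n+1})$ does not involve $p_\theta(Tu,u)$.
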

    
\begin{proof}
    Construc the sequence $(x_n)_{n \geq0}$ with the initial term $x_0 \in X$ and recursion $x_n = Tx_{n-1}$ for all $n\in \mathbb{N}$. Setting $x= x_{m-1}$ and $y= x_{n-1}$ to the inequality $p_\theta(Tx,Ty)\leq k[p_\theta(x,Ty)+p_\theta(y,Ty)]$ over $m,n \in \mathbb{N}$ implies 
    \begin{equation}\label{Theo3 : 1}
        (\forall m,n \in \mathbb{N}) \quad p_\theta(x_m,x_n) -k \: p_\theta(x_{m-1},x_n) \leq k \: p_\theta(x_n ,x_{n-1})
    \end{equation}
    such that for all $m,n \in \mathbb{N}$:
    \begin{align*}
        p_\theta(x_{n+m},x_n)-k^mp_\theta(x_n,x_n) &= \sum_{t=1}^m \left[k^{t-1} p_\theta(x_{n+m-t+1}, x_n) - k^t \:p_\theta (x_{n+m+1},x_n)  \right]\\
        &\leq \left( \sum_{t=1}^m k^t \right) p_\theta(x_n,x_{n-1})\\
        &= \frac{1-k^{m+1}}{1-k} .p_\theta(x_n,x_{n-1})
    \end{align*}
    and we have
    \begin{equation}\label{Theo3 : 2}
        p_\theta(x_{n+m},x_n) \leq k^mp_\theta(x_n,x_n)+ \frac{1-k^{m+1}}{1-k} .p_\theta(x_n,x_{n-1}).
    \end{equation}
    Using \eqref{Theo3 : 1} and induction, the inequality holds for all $n\in \mathbb{N}$
    \begin{equation}\label{Theo3 : 3}
        p_\theta(x_{n+1},x_n) \leq k^n p_\theta(x_1,x_0) + k \sum_{t=1}^n k^{n-t}p_\theta(x_t,x_t).
    \end{equation}
    Observe that $\lim_{n\to \infty} p_\theta(T^nx_0,T^nx_0)=\lim_{n\to \infty} np_\theta(x_n,x_n)=0 $ such that $\lim_{n\to \infty} p_\theta(x_n,x_n)=0$. Also, observe that 
    \begin{equation}\label{Theo3 : 4}
        \sum_{t=1}^n k^{n-t}p_\theta(x_t,x_t) = \sum_{t=1}^n \frac{k^{n-t} t \:p_\theta(x_t,x_t)}{n+1}  + \sum_{t=1}^n \frac{k^{n-t} (n-t+1) \:p_\theta(x_t,x_t)}{n+1}
    \end{equation}
    A well-known theorem states that if a real sequence $(f_n)$ dan $(g_n)$ converge to $a$ and $b$ respectively, then
    $$\lim_{n \to \infty} \frac{1}{n+1} (f_1g_n +f_2g_{n-1}+ \cdots+f_ng_1)=ab.$$ Since $\lim_{n\to \infty} k^n =0$ and $\lim_{n \to \infty} np_\theta(x_n,x_n)=0$, then $$ \lim_{n\to \infty}\sum_{t=1}^n \frac{k^{n-t} t \:p_\theta(x_t,x_t)}{n+1}=0$$. Since $\lim_{n\to \infty} (n+1)k^n =0$ and $\lim_{n \to \infty} p_\theta(x_n,x_n)=0$, then $$ \lim_{n\to \infty}\sum_{t=1}^n \frac{k^{n-t} (n-t+1) \:p_\theta(x_t,x_t)}{n+1} =0.$$ 
    Limiting $n\to \infty$ to \eqref{Theo3 : 4} yields $$\lim_{n\to \infty }\sum_{t=1}^n k^{n-t}p_\theta(x_t,x_t)=0.$$
    Then by limiting $n \to \infty$ to \eqref{Theo3 : 3} yields $\lim_{n \to \infty}p_\theta(x_{n+1},x_n)=0$. Then, limiting $m,n \to \infty$ to \eqref{Theo3 : 2} we have 
    $$\lim_{n\to \infty} p_\theta(x_{m+n},x_n)=0.$$ Hence $(x_n)_{n \geq0}$ is a $0$-cauchy sequence. Since $(X,p_\theta ,\theta)$ is $0$-complete, then there exist $u\in X$ such that $\lim_{n\to \infty} p_\theta(x_n,u)=p_\theta(u,u)=0$. We will show that $u$ is a fixed point of $T$. According to triangle inequality of partial extended $b$-metric space, we have $(\forall n \in \mathbb{N})$
    \begin{equation} \label{theo3 : 5}
        p_\theta(Tu,u) \leq \theta(Tu,u)[p_\theta(Tu,x_{n+1})+p_\theta(x_{n+1},u)]- p_\theta (x_{n+1},x_{n+1}).
    \end{equation}
    Setting $x=u$ and $y=x_{n}$ to $p_\theta(Tx,Ty)\leq k[p_\theta(x,Ty)+p_\theta(y,Ty)]$ yields 
    \begin{equation}\label{theo3 : 6}
       (\forall n \in \mathbb{N}) \quad p_\theta(Tu,x_{n+1})\leq k[p_\theta(u,x_{n+1})+p_\theta(x_n,x_{n+1})]
    \end{equation}
    Combining \eqref{theo3 : 5} and \eqref{theo3 : 6}, we obtain $\forall n \in \mathbb{N}$: 
    \begin{align} \label{Theo3 : 7}
        (1- \theta(Tu,u)k) p_\theta(Tu,u) \leq \theta(Tu,u)k \cdot p_\theta(x_{n+1},x_n) + \theta(&Tu,u) p_\theta(x_{n+1},x_n) \nonumber \\ &- p_\theta(x_{n+1},x_{n+1})
    \end{align}
    
    Limiting $n \to \infty$ to \eqref{Theo3 : 7} yields $p_\theta(Tu,u)=0$ since $1-\theta(Tu,u)k \in (0,1]$. Hence $Tu=u$, i.e. $u$ is the fixed point of $T$. For the uniqueness of fixed point, let $w$ be a fixed point of $T$. Setting $(x,y) = (u,w)$ and $(x,y) = (w,u)$ to the inequality $p_\theta(Tx,Ty)\leq k[p_\theta(x,Ty)+p_\theta(y,Ty)]$, we will get $p_\theta(u,w) \leq \frac{k}{1-k} \cdot p_\theta(w,w)\leq p_\theta(w,w)$ and $p_\theta(u,w) \leq \frac{k}{1-k} \cdot p_\theta(u,u)\leq p_\theta(u,u)$. Since $p_\theta(x,x) \leq p_\theta(x,y)$ for all $x,y \in X$, then $p_\theta(u,w)=p_\theta(u,u)=p_\theta(w,w)$. Therefore $u=w$. In conclusion, $T$ has a unique fixed point $u$ where $p_\theta(u,u)=0$ and $u=\lim_{n \to \infty} T^n x_0$
\end{proof}

\begin{example}
    Let $X = [0,1]$. Define a function $p_{\theta}: X \times X \to \mathbb{R}$ by $p_{\theta}(x,y) = |x-y|+x, \forall x,y \in X$, and let $\theta : X \times X \to [1, \infty)$ be given by $\theta(x,y) = 1$. Define a mapping $T: X \to X$ by $T(x) = \frac{x}{4}, \forall x \in X$. Show that T satisfies Theorem 3.12.
\end{example}
\section{Conclusion}
As a logical extension of extended b-metric spaces (EBMS) and partial b-metric spaces, we have presented the idea of partial extended b-metric spaces (PEBMS) in this study. The suggested structure greatly expands the range of classical metric-type spaces by incorporating both the flexibility of a point-dependent control function $\theta$ and the allowance of non-zero self-distance. It has been demonstrated that when the self-distance is zero, any extended b-metric space can be thought of as a specific instance of a partial extended b-metric space. As a result, EBMS is a special subclass of the PEBMS class. This addition demonstrates the new structure's versatility and generality.

We also found a number of important results in this context, such as convergence properties, auxiliary lemmas, and fixed point theorems. We successfully extended classical results from EBMS to the PEBMS framework by introducing the concepts of 0-Cauchy sequences and 0-completeness. This shows that many important results in extended b-metric spaces still hold true in a broader context, as long as the right changes are made.In addition, the obtained fixed point results were applied to discrete dynamical systems, showing that PEBMS provides a suitable framework for analyzing stability and convergence behavior under non-standard distance structures. The presence of non-zero self-distance allows modeling systems with intrinsic uncertainty or internal state deviations.

In general, the idea behind PEBMS is that it creates a more flexible and complete mathematical structure that brings together and expands on several existing spaces. This work opens up new areas for future research, such as looking into more general contraction mappings, figuring out how stable nonlinear systems are, and possible uses in applied math and theoretical computer science. 

\section{Acknowledgment}
The author would like to express sincere gratitude to all authors whose works have contributed significantly to the development of this study. Their valuable ideas, results, and insights have provided a strong foundation for this research.

The author also extends appreciation to all individuals who have offered support, suggestions, and assistance, including those who contributed in small but meaningful ways throughout the completion of this work. Their help, encouragement, and attention to detail are deeply appreciated.



\begin{thebibliography}{99}

\bibitem{kamran2017}
Kamran, T., Samreen, M., UL Ain, Q., 2017,
A Generalization of $b$-Metric Space and Some Fixed Point Theorems,
\textit{Mathematics}, Vol. 5.

\bibitem{Kannan1968}
Kannan, R. (1968).
Some results on fixed points.
\textit{Bulletin of the Calcutta Mathematical Society}, Vol. 60 : 71--76.

\bibitem{Karahan2018PARTIALBP}
Karahan, I. and Isik, I. (2018).
Partial $b_{v}(s)$, partial $v$-generalized and $b_{v}(\theta)$ metric spaces and related fixed point theorems.
\textit{Facta Universitatis, Series: Mathematics and Informatics}, Vol. 35: 621--640

\bibitem{Bakhtin1989}
Bakhtin, I. A. (1989).
The contraction mapping principle in quasi-metric spaces.
\textit{Functional Analysis and Its Applications}, Vol. 30 : 26--37.

\bibitem{Czerwik1993}
Czerwik, S. (1993).
Contraction mappings in $b$-metric spaces.
\textit{Acta Mathematica et Informatica Universitatis Ostraviensis}, Vol. 1 : 5--11.

\bibitem{Bhattacharjee2025}
Bhattacharjee, K., Das, R., and Das, A. K. (2025).
New fixed point theorem on extended $b$-metric space using a class of contraction mappings.
\textit{Tatra Mountains Mathematical Publications}, Vol. 89 : 1--14.

\bibitem{Chen2020}
Chen, L., Li, C., Kaczmarek, R., and Zhao, Y. (2020).
Several fixed point theorems in convex $b$-metric spaces and applications.
\textit{Mathematics}, Vol. 8 : 242.

\bibitem{Dung2017}
Van Dung, N. and Le Hang, V. T. (2017).
Remarks on partial $b$-metric spaces and fixed point theorems.
\textit{Matematicki Vesnik}, Vol. 69 : 231--240.

\bibitem{Boriceanu2010}
Boriceanu, M., Bota, M., and Petrusel, A. (2010).
Multivalued fractals in $b$-metric spaces.
\textit{Open Mathematics}, Vol. 8 : 367--377.

\bibitem{Hussain2012}
Hussain, N., Dori\'c, D., Kadelburg, Z., Radenovi\'c, S., 2012,
Suzuki-Type Fixed Point Results in Metric Type Spaces,
\textit{Fixed Point Theory and Applications}, Vol. 2012, p. 126.

\bibitem{Matthews1994}
Matthews, S. G. (1994).
Partial metric topology.
\textit{Annals of the New York Academy of Sciences}, Vol. 728 : 183--197.

\bibitem{Shukla2014}
Shukla, S. (2014).
Partial $b$-metric spaces and fixed point theorems.
\textit{Mediterranean Journal of Mathematics}, Vol. 11 : 703--711.

\bibitem{Akkouchi2023}
Akkouchi, M. (2023).
On the Banach principle in $b$-metric spaces.
\textit{Bulletin of International Mathematical Virtual Institute}, Vol. 13.

\bibitem{agarwal2018banach}
Agarwal, P., Jleli, M., Samet, B., 2018,
Banach Contraction Principle and Applications,
in \textit{Fixed Point Theory in Metric Spaces: Recent Advances and Applications},
Springer, Singapore, pp. 1--23.

\end{thebibliography}





\end{document}